\documentclass{amsart}
\usepackage{amsfonts,amssymb,amsmath,amsthm}
\usepackage{url}
\usepackage{verbatim}
\usepackage{enumerate}
\usepackage{color}
\usepackage[color,all]{xy}
\usepackage{hyperref}
\urlstyle{sf}

\usepackage[all]{xy}
\newtheorem{thrm}{Theorem}[section]
\newtheorem{lem}[thrm]{Lemma}

\newtheorem{cor}[thrm]{Corollary}

\theoremstyle{definition}
\newtheorem{example}[thrm]{Example}
\newtheorem{definition}[thrm]{Definition}
\newtheorem{remark}[thrm]{Remark}

\newcommand{\Sing}{\operatorname{Sing}}

\newcommand{\Pic}{\operatorname{Pic}}

\newcommand{\Hom}{\operatorname{Hom}}
\newcommand{\im}{\operatorname{Im}}
\newcommand{\Iden}{\operatorname{Id}}

\newcommand{\End}{\operatorname{End}}

\newcommand{\cF}{\mathcal{F}}
\newcommand{\Oc}{\mathcal{O}_C}
\newcommand{\cU}{\mathcal{U}}
\newcommand{\Und}{\cU_C (n, d)}
\newcommand{\PP}{\mathbb{P}}
\newcommand{\bV}{\mathbb{V}}
\newcommand{\bM}{\mathbb{M}}
\newcommand{\hX}{\widehat{X}}

\author{Parviz Asefi Nazarlou}
\address{P.\ A.\ N.: Department of Mathematics, Azarbaijan Shahid Madani University, Tabriz, I.\ R.\ Iran, P.\ O.\ box 53751-71379}
\email{p.asefinazarloo@azaruniv.ac.ir}

\author{Ali Bajravani}
\address{A.\ B.: Institut für Mathematik, Humboldt Universität zu Berlin, Germany\\
Department of Mathematics, Azarbaijan Shahid Madani University, Tabriz, I.\ R.\ Iran, P.\ O.\ box 53751-71379}
\email{ali.bajravani@hu-berlin.de; bajravani@azaruniv.ac.ir}

\author{George H.\ Hitching}
\address{G.\ H.\ H.: Oslo Metropolitan University, Postboks 4, St. Olavs plass, 0130 Oslo, Norway}
\email{gehahi@oslomet.no}

%




\keywords{Brill--Noether theory; 
stable vector bundle}

\subjclass{Primary 14H60; Secondary 14H51.}

\numberwithin{equation}{section}

\begin{document}

\title{Martens and Mumford theorems for higher rank Brill--Noether loci}

\begin{abstract}
Generalizing the Martens theorem for line bundles over a curve $C$, we obtain upper bounds on the dimension of the Brill--Noether locus $B^k_{n, d}$ parametrizing stable bundles of rank $n \ge 2$ and degree $d$ over $C$ with at least $k$ independent sections. This proves a conjecture of the second author and generalizes bounds obtained by him in the rank two case. We give more refined results for some values of $d$, including a generalized Mumford theorem for $n \ge 2$ when $d \le g - 1$. The statements are obtained chiefly by analysis of the tangent spaces of $B^k_{n, d}$. As an application, we show that for $n\geq 5$ the locus $B^2_{n, n(g-1)}$ is irreducible and reduced for any $C$.
\end{abstract}

\maketitle

\section{Introduction}

Let $C$ be a complex projective smooth curve of genus $g$. We denote by $\Und$ the moduli space of stable bundles of rank $n$ and degree $d$ over $C$; it is an irreducible quasiprojective variety of dimension $n^2 (g-1) + 1$. For $1 \leq k \leq n + \frac{d}{2}$, the Brill--Noether subvariety $B^k_{n, d}$ is set-theoretically the locus of bundles in $\Und$ with at least $k$ independent sections. The locus $B^k_{n, d}$ is determinantal of expected dimension $n^2(g-1) + 1 -k (k - d + n(g-1))$, and naturally generalizes the familiar Brill--Noether loci $W_d^{k-1} (C) = B^k_{1, d}$ on the Picard variety $\Pic^d (C)$.

A fundamental result in Brill--Noether theory is Martens' theorem \cite[p.\ 191]{ACGH} (see also Lemma \ref{ExtMartens} below), which gives an upper bound for the dimension of $B^k_{1, d}$. Various extensions of this result in the line bundle setting exist in the literature. For instance, Caporaso \cite{Ca} extended Martens' theorem to binary curves, and in \cite{Baj15} and \cite{Baj17} the second author obtained similar bounds for the dimensions of the secant schemes of very ample line bundles.

Moving to higher rank: The second author obtained in \cite[Theorem 1]{Baj20} a Martens-type bound for the dimension of $B^k_{2, d}$, and conjectured in \cite[Remark 1 (b)]{Baj20} an upper bound for $\dim B^k_{n, d}$ in terms of $g$, $n$, $d$ and $k$. A main object of the present work is to prove a refined version of this conjecture for $n \ge 3$.

A natural extension of a result of Feinberg \cite[Proposition 1]{Baj20} is a key tool in our argument, and leads us naturally for rank $n \ge 2$ to discuss Brill--Noether loci for two different types of bundle: \emph{first type}, where all sections of the bundle belong to a line subbundle, and \emph{second type}, where if $k \ge 2$ the generated subsheaf has rank at least $2$ (see Definition \ref{types} for a precise formulation). This dichotomy is visible also in \cite{BH}, where Brill--Noether loci were studied on moduli spaces of symplectic bundles via techniques similar to those of \cite{Baj20}.

Let us give a summary of the paper. In {\S} \ref{Bounds} we generalize the aforementioned result of Feinberg in Lemma \ref{lemma4}, and proceed to derive in Theorems \ref{boundthrmFirst} and \ref{boundthrmSecond} the Martens-type dimension bounds for $B^k_{n, d}$, for components where the generic element has first type and second type respectively. In the case of first type, as in \cite[{\S} 3]{BH} the bound depends on the degree of the generated line subbundle as well as on $g$, $n$, $d$ and $k$. In Example \ref{sharp}, we show that the bound is sharp in the case of first type, and in Corollary \ref{RefinedFirstType}, we obtain numerical restrictions imposed by the requirement that a component have general element of first type.

In {\S} \ref{gmo}, we give more refined results for $d \le g-1$. In particular, Theorem \ref{GenMumford} is a generalization of Mumford's theorem to rank $n \ge 2$. In the final section, we apply our dimension bounds in Theorem \ref{B2irr} to prove that if $n\geq 5$, then the locus $B^2_{n, n(g-1)}$ is irreducible and reduced for any curve $C$.

Avenues of future investigation include generalizing and refining the dimension bounds in Theorems \ref{boundthrmFirst} and \ref{boundthrmSecond}, as well as Theorems \ref{GenMumford} and \ref{B2irr}. We hope that these results and techniques may also be applicable to the study of \emph{generalized secant loci}, which are Brill--Noether-type loci on certain Quot schemes of bundles over $C$ studied in \cite{Hit}; in particular, investigating the validity of a Martens-type bound suggested in \cite[Conjecture 2.13]{Hit}.

\subsection*{Notation}
Throughout, $C$ will denote a complex projective smooth curve of genus $g \ge 3$. For any sheaf $F$ over $C$, we abbreviate $H^i (C, F)$ and $h^i (C, F)$ to $H^i (F)$ and $h^i (F)$ respectively.

\section{A Martens-type bound for higher rank bundles} \label{Bounds}

We recall firstly the notions of bundles of first and second type from \cite{Baj20}.

\begin{definition} \label{types}
Let $V$ be a bundle over $C$ with $h^0 (V) \ge 1$. Then $V$ is said to be \textsl{of first type} if $H^0 (V) = H^0 (L)$ for a line subbundle $L \subseteq V$. If $V$ has a line subbundle $M$ with $h^0 (M) = 1$, then $V$ is \textsl{of second type}.
\end{definition}

\begin{remark}
Let $V \to C$ be a vector bundle of rank $n \ge 2$ satisfying $h^0 (V) \ge 1$.
\begin{enumerate}
\renewcommand{\labelenumi}{(\roman{enumi})}
\item 
The bundle $V$ is of first type if and only if the generated subsheaf 
$$
\im \left( \Oc \otimes H^0 ( V ) \ \to \ V \right)
$$
has rank one.
\item It is easy to see that $h^0 (V) = 1$ if and only if $V$ is both of first type and of second type.
\end{enumerate}
\end{remark}

\begin{lem} \label{lemma4}
Suppose that $V$ is a bundle of rank $n \geq 2$ which is not of first type (in particular, this implies that $h^0 (V) \geq 2$). Then there exists a subbundle $H \subseteq V$ of rank $n-1$ with $h^0 (H) = 1$.
\end{lem}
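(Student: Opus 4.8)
The plan is to build the required subbundle in two stages: first produce a line subbundle $L_0 \subseteq V$ with $h^0(L_0) = 1$, and then enlarge $L_0$ to a rank $(n-1)$ subbundle $H$ without acquiring any further sections. The first stage is where the hypothesis ``not of first type'' enters, and it is here that I would reduce to the rank two case, namely Feinberg's result \cite[Proposition 1]{Baj20}, which asserts that a rank two bundle with $h^0 \ge 2$ that is not of first type carries a line subbundle with exactly one section.

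For the first stage: since $V$ is not of first type, no single line subbundle contains all of $H^0(V)$, so there exist sections $s, t \in H^0(V)$ whose saturated line subbundles $L = \overline{\Oc \cdot s}$ and $M = \overline{\Oc \cdot t}$ are distinct. (Indeed, if every section had the same saturation $L_*$, then $H^0(V) = H^0(L_*)$ and $V$ would be of first type.) Let $U \subseteq V$ be the rank two subbundle obtained by saturating $L + M$. Since $L \ne M$ are distinct line subbundles of $U$ carrying sections, and two distinct line subbundles cannot both sit inside a common line subbundle, $U$ is not of first type. Feinberg's rank two result then produces a line subbundle $L_0 \subseteq U \subseteq V$ with $h^0(L_0) = 1$, completing the first stage.

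For the second stage I would pass to $V' := V/L_0$, of rank $n-1$, and consider $\bar S := \im\left(H^0(V) \to H^0(V')\right)$, of dimension $h^0(V) - 1$ since its kernel is $H^0(L_0)$. I would then invoke the following general position statement: for any bundle $W$ of rank $\ge 1$ and any subspace $T \subseteq H^0(W)$, there is a corank one subbundle $K \subseteq W$ with $H^0(K) \cap T = 0$. Granting this for $(W, T) = (V', \bar S)$, let $H \subseteq V$ be the preimage of $K$ under $V \to V'$; then $H$ has rank $n-1$, and any section of $H$ maps into $\bar S \cap H^0(K) = 0$, hence lies in $L_0$. Therefore $H^0(H) = H^0(L_0)$ is one-dimensional, as required.

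The remaining point, which I expect to be the main obstacle, is the general position statement. For $\operatorname{rk} W = 1$ it is trivial ($K = 0$). For $\operatorname{rk} W \ge 2$ I would realise $K$ as the kernel of a sufficiently positive general quotient line bundle $q \colon W \twoheadrightarrow \Oc(D)$ with $D \gg 0$; such surjections correspond to nowhere-vanishing sections of $W^*(D)$, which exist since $W^*(D)$ is globally generated of rank $\ge 2$. For each fixed nonzero $\tau \in T$ the locus $\{q : q \circ \tau = 0\}$ is a linear subspace of $H^0(W^*(D))$ of codimension $h^0\big(\Oc(D - Z(\tau))\big)$, where $Z(\tau)$ is the zero divisor of $\tau$; for $D \gg 0$ this codimension exceeds $\dim T$, so the union of these ``bad'' loci over $[\tau] \in \PP(T)$ stays a proper subvariety. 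A general $q$ then avoids them all, giving $q \circ \tau \ne 0$ for every nonzero $\tau \in T$, i.e.\ $H^0(K) \cap T = T \cap \ker(q_*) = 0$. The crux of the whole argument is exactly this control of the \emph{precise} number of sections: producing one section is the reduction to Feinberg's theorem, while ruling out any extra section is what the positivity of $D$ secures in the general position lemma.
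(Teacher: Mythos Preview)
Your proof is correct and shares the paper's two-stage skeleton: first obtain a line subbundle with exactly one section, then enlarge it to rank $n-1$ without acquiring further sections. For the first stage the paper cites \cite[Lemma~1]{Baj20} directly for $V$ of arbitrary rank; your explicit reduction to a rank-two subbundle $U$ (the saturation of two sections with distinct line-bundle saturations) is a harmless and arguably more transparent detour. The genuine difference lies in the second stage. The paper invokes Popa--Roth \cite[Proposition~6.1]{PR} to produce a \emph{stable} corank-one subbundle $G \subset P=V/N$ of large negative degree, whence $h^0(G)=0$ immediately. You instead realise $K$ as the kernel of a general surjection $V/L_0 \twoheadrightarrow \Oc(D)$ with $D \gg 0$ and run an incidence-variety dimension count to force $H^0(K)\cap\bar S = 0$. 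Your route is more elementary---it avoids the Quot-scheme input behind Popa--Roth, using only global generation of $W^*(D)$ for $D\gg 0$ and a parameter count---and it isolates precisely the condition actually needed ($H^0(K)\cap\bar S=0$) rather than the stronger $h^0(K)=0$; in fact your argument with $T=H^0(W)$ would recover $h^0(K)=0$ as well. The paper's route is terser and delivers the cleaner conclusion $h^0(G)=0$ in one stroke, at the price of a heavier external citation.
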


\begin{proof}
By \cite[Lemma 1]{Baj20}, since $V$ is not of first type there exists an exact sequence $0 \rightarrow N \rightarrow V \rightarrow P \rightarrow 0$ where $N$ is a line bundle satisfying $h^0 (N) = 1$ and $P$ has rank $n-1$. If $n = 2$ then we may set $H = N$ and obtain the lemma.

If $n \geq 3$ then according to \cite[Proposition 6.1]{PR}, for $\beta \gg 0$ the bundle $P^*$ has a stable vector bundle quotient $G^*$ of rank $n - 2$ and degree $\beta$. The corresponding corank one subbundle $G \subset P$ is then stable and of negative degree, and therefore satisfies $h^0 (G) = 0$. Let $H$ be the preimage of $G$ in $V$, so that we have a diagram
\[ \xymatrix{
0 \ar[r] & N \ar[r] & V \ar[r] & P \ar[r] & 0 \\
0 \ar[r] & N \ar[r] \ar[u]^\wr & H \ar[r] \ar[u] & G \ar[r] \ar[u] & 0 .
} \]
As $h^0 (G) = 0$, we have $h^0 (H) = h^0 (N) = 1$, and we are done.
\end{proof}

As in \cite[{\S} 3]{BH}, we treat separately the cases where the general element of a component of $B^k_{n, d}$ is of first type and of second type.

\subsection{Components whose elements are of first type}

We recall now a familiar description of the tangent spaces to $B^k_{n, d}$; see for example \cite[{\S} 2]{GT}. Suppose that $E$ is a point of $B^k_{n, d} \setminus B^{k+1}_{n, d}$. Denote by
\[
\mu_E \colon H^0 (E) \otimes H^0 (K \otimes E^*) \ \rightarrow \ H^0 (K \otimes E \otimes E^*)
\]
the Petri map associated to $E$. Then
\[
T_E B^k_{n, d} \ = \ \left( \im \mu_E \right)^\perp \ \subseteq \ H^0(K\otimes E\otimes E^*)^* .
\]

\begin{thrm} \label{boundthrmFirst}
Let $X \subseteq B^k_{n, d}$ be a closed irreducible sublocus such that a general $V \in X$ satisfies $h^0 (V) = k$ and the subbundle generated by $H^0 (V )$ is a line bundle $M$ of degree $\ell$. For any such $V$, we have
$$
\dim X \ \leq \ \dim  T_V X \ \leq \ n(n-1)(g-1) + d - n \ell +\dim T_M B^k_{1, \ell} .
$$
\end{thrm}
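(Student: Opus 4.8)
The first inequality $\dim X\le\dim T_V X$ is automatic: since $X$ is irreducible, $\dim X$ equals the local dimension of $X$ at every point, which is bounded above by the dimension of the Zariski tangent space. The content is therefore the second inequality, and the plan is to bound $\dim T_V X$ by first placing it inside $T_V B^k_{n,d}$ and then exploiting the first-type structure. Since a general $V\in X$ satisfies $h^0(V)=k$, it lies in $B^k_{n,d}\setminus B^{k+1}_{n,d}$, so the tangent space description recalled above applies and $T_V X\subseteq T_V B^k_{n,d}=(\im\mu_V)^\perp$. Via Serre duality I shall regard $H^1(\End V)=H^0(K\otimes V\otimes V^*)^*$ as the ambient space, so that $\dim T_V B^k_{n,d}=\dim H^1(\End V)-\rk\mu_V=n^2(g-1)+1-\rk\mu_V$, the last equality using simplicity of the stable bundle $V$.

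Next I would extract the line subbundle. As $V$ is of first type, $H^0(V)=H^0(M)$, and the saturation of the generated subsheaf is the line subbundle $M$ with $h^0(M)=k$ and $\deg M=\ell$; write the defining sequence $0\to M\to V\to W\to 0$ with $W$ of rank $n-1$ and degree $d-\ell$. The crucial structural input is that every $s\in H^0(V)$ factors through $\iota\colon M\hookrightarrow V$. Consequently $\im\mu_V$ lands in the subspace $H^0(K\otimes\Hom(V,M))\subseteq H^0(K\otimes\End V)$, and composing with the trace-type projection $\Hom(V,M)\to\Hom(M,M)=\Oc$ induced by $\iota$ gives a commuting square $q\circ\mu_V=\mu_M\circ(\Iden\otimes\mathrm{res})$, where $\mathrm{res}\colon H^0(K\otimes V^*)\to H^0(K\otimes M^*)$ is restriction. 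Equivalently, the filtration $0\to\Hom(V,M)\to\End V\to\Hom(V,W)\to 0$ induces a decomposition of $T_V B^k_{n,d}$ into a ``diagonal'' part governed by $M$ and an off-diagonal part governed by $\Hom(W,M)$ and by deformations of $W$.

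Matching these two pieces to the two summands on the right-hand side is the heart of the argument. The diagonal piece should reproduce $\dim T_M B^k_{1,\ell}=g-\rk\mu_M$, while the off-diagonal and quotient pieces should contribute $\dim\Ext^1(W,M)+\dim\Ext^1(W,W)$. Here I would use the key vanishing $\Hom(W,M)=0$: a nonzero $W\to M$ would yield, after composing with $V\twoheadrightarrow W$ and $M\hookrightarrow V$, a rank-one endomorphism of $V$, impossible since $V$ is simple. With this vanishing a Riemann--Roch computation gives $\dim\Ext^1(W,M)=(n-1)(g-1)+d-n\ell$ and $\dim\Ext^1(W,W)=(n-1)^2(g-1)+h^0(\End W)$, whose sum is $n(n-1)(g-1)+d-n\ell+h^0(\End W)$; the leading terms are exactly what is wanted. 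In the language of Petri maps it would suffice to establish the rank inequality $\rk\mu_V\ge\rk\mu_M+(n-1)(g-1)+n\ell-d$, which bounds the full space $T_V B^k_{n,d}$ and hence a fortiori $T_V X$; one proves it by combining the surjection $q$ (recovering $\mu_M$) with the multiplication map $H^0(M)\otimes H^0(K\otimes W^*)\to H^0(K\otimes W^*\otimes M)$ sitting inside $\ker q\cap\im\mu_V$.

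The main obstacle is the exact bookkeeping of the lower-order terms. The naive sum of the $\Ext$-contributions overshoots the target by $h^0(\End W)$ and by the scalar endomorphism of $V$, and the commuting square only controls $\mu_M$ up to the cokernel of $\mathrm{res}$; so the delicate point is to show that these corrections cancel --- using simplicity of $V$, the precise dimensions in the long exact sequences attached to $0\to M\to V\to W\to 0$, and, if the surplus does not already vanish at the level of $B^k_{n,d}$, the finer fact that tangent vectors to $X$ (rather than to all of $B^k_{n,d}$) must preserve the first-type structure to first order, pinning $M$ down and trimming the extra deformations of $W$. Verifying that this constraint removes exactly the surplus, so that the two pieces assemble to $n(n-1)(g-1)+d-n\ell+\dim T_M B^k_{1,\ell}$, is where the real work lies.
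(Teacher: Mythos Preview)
Your approach via the Petri map is genuinely different from the paper's and, as you yourself flag, incomplete. The main gap is that the rank inequality $\rk\mu_V\ge\rk\mu_M+(n-1)(g-1)+n\ell-d$ would bound $\dim T_V B^k_{n,d}$, not merely $\dim T_V X$, and this stronger statement is not what the theorem asserts. The obstructions you list are real: for instance, your restriction map $\mathrm{res}\colon H^0(K\otimes V^*)\to H^0(K\otimes M^*)$ has cokernel of dimension $h^0(W)$ (from the long exact sequence of $0\to M\to V\to W\to 0$, using that $H^0(W)\hookrightarrow H^1(M)$ since $H^0(V)=H^0(M)$), and nothing forces $h^0(W)=0$; likewise nothing controls $h^0(\End W)$. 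Your proposed fallback --- that tangent vectors to $X$ preserve the first-type structure to first order --- is exactly the right idea, but you leave it entirely unimplemented, and it is not clear from what you wrote how it would interact with the Petri decomposition to remove precisely the surplus.

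The paper bypasses the Petri map altogether and makes that ``$X$-constraint'' do all the work from the outset. Writing $j\colon M\hookrightarrow V$, one has induced maps $j^*\colon H^1(\End V)\to H^1(\Hom(M,V))$ and $j_*\colon H^1(\End M)\to H^1(\Hom(M,V))$. A first-order deformation $\bV$ of $V$ lies in $T_V X$ only if the line subbundle $M$ deforms compatibly to some $\bM$ lying in $T_M B^k_{1,\ell}$; this compatibility is exactly the condition $j^*\delta(\bV)=j_*\delta(\bM)$. Hence $T_V X\subseteq (j^*)^{-1}\bigl(j_*(T_M B^k_{1,\ell})\bigr)$. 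Since $j^*$ is surjective, the right-hand side has dimension at most $h^1(\End V)-h^1(\Hom(M,V))+\dim T_M B^k_{1,\ell}$, and using $h^0(\Hom(M,V))\ge 1$ together with Riemann--Roch for $\Hom(M,V)$ gives exactly $n(n-1)(g-1)+d-n\ell+\dim T_M B^k_{1,\ell}$. No decomposition of $\mu_V$, no control of $h^0(\End W)$ or $h^0(W)$, and no cancellation argument is needed.
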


\begin{proof}
We follow a strategy similar to that in \cite[Theorem 3.5]{BH}, but in fact simpler. Let $j \colon M \rightarrow V$ be the inclusion. Then there are maps
$$
j^* \colon H^1 (\End V) \ \rightarrow \ H^1 (\Hom(M, V)) \quad \hbox{and} \quad j_* \colon H^1 (\End M) \ \rightarrow \ H^1 (\Hom(M,V)) .
$$
Now a deformation $\bV$ of $V$ induces a deformation of $M$ if and only if there exists a commutative diagram
\[ \xymatrix{
0 \ar[r] & M \ar[r] \ar[d]_j & \bM \ar[r] \ar[d] & M \ar[r] \ar[d]_j & 0 \\
0 \ar[r] & V \ar[r] & \bV \ar[r] & V \ar[r] & 0
}\]
This is equivalent to the condition that
\begin{equation} \label{defequality}
 j_*\delta(\mathbb{M}) = j^*\delta (\mathbb{V}) \quad \in \quad H^1(\Hom(M,V)) .
\end{equation}
Now $M$ defines a point of $B^k_{1, \ell}$. The deformation $\bV$ corresponds to a tangent direction in $T_V X$ only if $V$ satisfies (\ref{defequality}) for some $\bM$ belonging to $T_M B^k_{1, \ell} \subseteq H^1 (\End M)$. It follows that
\begin{equation}\label{defequation2}
T_V X \ \subseteq \ \left( j^* \right)^{-1} \left( j_* \left( T_M B^k_{1,l} \right) \right) .
\end{equation}
Now clearly $j^*$ is surjective. As $h^0 (\Hom(M, V)) \geq 1$ (in fact one can show that there is equality), by (\ref{defequation2}) we obtain
$$
\dim T_V X \ \leq \ h^1 (\End V) - 1 + \chi( \Hom(M, V) ) + \dim T_M B^k_{1,\ell} .
$$
By Riemann--Roch, this becomes
$$
\dim T_V X \ \leq \ n(n-1)(g-1) + d - n \ell + \dim T_M B^k_{1, \ell} ,
$$
as desired.
\end{proof}


The following simple observation allows us to use the original Martens' theorem in cases of interest.

\begin{lem}[Martens' Theorem] \label{ExtMartens}
Assume that $C$  is a smooth curve of genus $g \geq 3$. Suppose that $2 \le \ell \le 2g - 4$ and $ 0 < 2k-2 \leq \ell $. Then $\dim B^k_{1, \ell} \le \ell - 2k + 2$, with equality if and only if $C$ is hyperelliptic.
\end{lem}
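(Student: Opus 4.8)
We translate to classical notation. Set $r := k-1 \ge 1$; then $B^k_{1,\ell}$ is the familiar Brill--Noether variety $W^r_\ell(C) = \{L \in \Pic^\ell(C) : h^0(L) \ge r+1\}$, and the asserted bound reads $\dim B^k_{1,\ell} \le \ell - 2r$. The plan is to bound the dimension at a general point of an arbitrary component by its tangent space, using the description recalled just before Theorem \ref{boundthrmFirst} specialised to rank one. First I would fix an irreducible component $X \subseteq B^k_{1,\ell}$ that is a proper subvariety of $\Pic^\ell(C)$, together with a general $L \in X$, so that $h^0(L) = r+1$ and $L \notin B^{k+1}_{1,\ell}$. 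The hypotheses $2 \le \ell \le 2g-4$ and $0 < 2k-2 \le \ell$ are exactly what guarantee, for such $L$, that $h^0(L) \ge 2$ and that $L$ is special with $h^0(K \otimes L^{-1}) \ge 1$, so that the cup-product map below is a map between nonzero spaces.

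Specialising the tangent space description to rank one gives $T_L B^k_{1,\ell} = (\im \mu_L)^\perp \subseteq H^0(K)^*$, where
\[
\mu_L \colon H^0(L) \otimes H^0(K \otimes L^{-1}) \ \rightarrow \ H^0(K)
\]
is the Petri map; hence $\dim X \le \dim T_L B^k_{1,\ell} = g - \rk \mu_L$. The heart of the matter is a lower bound for $\rk \mu_L$. When $|L|$ is base-point-free I would choose a two-dimensional base-point-free subspace of $H^0(L)$ and invoke the general position lemma (the base-point-free pencil trick) to obtain
\[
\rk \mu_L \ \ge \ h^0(L) + h^0(K \otimes L^{-1}) - 1 .
\]
Since $h^0(K \otimes L^{-1}) = g - \ell + r$ by Riemann--Roch, this yields $\rk \mu_L \ge g - \ell + 2r$ and therefore $\dim X \le g - (g - \ell + 2r) = \ell - 2k + 2$, as required. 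If instead $|L|$ carries a base divisor $D$ of degree $e \ge 1$, I would strip it off: $L' := L(-D)$ is base-point-free of degree $\ell - e$ with $h^0(L') = r+1$, and the assignment $L \mapsto (L', D)$ realises $X$ inside the product of the base-point-free locus of $W^r_{\ell-e}(C)$ with the space of effective divisors of degree $e$. Feeding the base-point-free estimate into this gives $\dim X \le (\ell - e - 2r) + e = \ell - 2k + 2$ again, so the inequality holds in every case.

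The delicate point, which I expect to be the main obstacle, is the equality clause. Equality $\dim X = \ell - 2k + 2$ forces every inequality above to be sharp; in particular it forces $\rk \mu_L$ to attain its minimal value $g - \ell + 2r$, that is, equality in the general position lemma for $L$ (after removing any base locus). I would analyse this extremal situation by showing that such minimality of the cup-product map can occur only when $|L|$ is compounded with a pencil of degree two, thereby producing a $g^1_2$ on $C$ and forcing $C$ to be hyperelliptic. Conversely, on a hyperelliptic curve one verifies directly that the bound is attained, for instance by the line bundles obtained from $r$ copies of the $g^1_2$ together with $\ell - 2r$ free base points, which sweep out a subvariety of $W^r_\ell(C)$ of dimension exactly $\ell - 2r$. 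Propagating the equality analysis uniformly through the base-point reduction, rather than establishing the inequality itself, is where the real work lies.
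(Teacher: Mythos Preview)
Your approach is correct in outline but takes a genuinely different route from the paper's. The paper does not reprove Martens' theorem at all: for $\ell \le g-1$ it simply \emph{cites} the classical statement from \cite{ACGH}, and for $g \le \ell \le 2g-4$ it applies Serre duality, observing that $L \mapsto K \otimes L^{-1}$ identifies $B^k_{1,\ell}$ with $B^{\,k-\ell+g-1}_{1,\,2g-2-\ell}$, whose degree $2g-2-\ell$ lies again in $[2,\,g-1]$; the classical Martens bound there simplifies back to $\ell - 2k + 2$, and the equality clause comes along for free. Your argument, by contrast, essentially reproves Martens' theorem from scratch via the tangent-space description and a Hopf/base-point-free-pencil bound on $\rk \mu_L$. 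The inequality part is sound (in fact the Hopf lemma gives $\rk \mu_L \ge h^0(L)+h^0(KL^{-1})-1$ directly, making the base-point case split unnecessary). What your route costs is exactly what you flag: the equality analysis---showing that sharpness of the rank bound forces a $g^1_2$---is the substantive part of Martens' theorem in \cite{ACGH}, and you only sketch it, whereas the paper sidesteps it entirely by citation plus the Serre duality trick. So the paper's argument is shorter and gets the ``if and only if'' clause for free, while yours is self-contained and more illuminating about the mechanism, but leaves real work undone in the equality case.
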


\begin{proof}
Note that $B^k_{1,\ell} = W_\ell^{k-1} (C)$ in the notation of \cite{ACGH}. If $\ell \le g-1$ then the statement is Martens' theorem \cite[p.\ 191 ff.]{ACGH} 
 If $g \le \ell \le 2g- 4$, then
\[
\dim B^k_{1, \ell} \ = \ \dim B^{k - \ell + g - 1}_{1, 2g - 2 - \ell} 
\]
by Serre duality. As $2 \le 2g - 2 - \ell \le g-1$, by Martens' theorem
\[
\dim B^{k - \ell + g - 1}_{1, 2g - 2 - \ell} \ \le \ (2g - 2 - \ell) - 2 (k - \ell + g - 1) + 2 \ = \ \ell - 2k + 2 ,
\]
with equality if and only if $C$ is hyperelliptic.
\end{proof}

\begin{cor} \label{cor1}
Suppose that $g \geq 2$ and $n \geq 2$ and $2 \leq d \leq 2n(g-1) - n$, and $k \ge 2$. If $X \subseteq B^k_{n, d}$ is a locus as in the hypothesis of Theorem \ref{boundthrmFirst}, then for $V \in X$, we have
$$
\dim_V X \ \leq \ \dim T_V X \ \leq \ n(n-1)(g-1) + d - 2k + 1 - 2(n-1) ,
$$
with equality only if $C$ is hyperelliptic. 
\end{cor}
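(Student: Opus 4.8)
The plan is to substitute the Martens bound of Lemma \ref{ExtMartens} into the estimate of Theorem \ref{boundthrmFirst} and then eliminate the dependence on $\ell = \deg M$. I would begin by recording, for $V \in X$,
\[
\dim_V X \ \le \ \dim T_V X \ \le \ n(n-1)(g-1) + d - n\ell + \dim T_M B^k_{1, \ell} ,
\]
so that everything reduces to bounding $\dim T_M B^k_{1,\ell}$ and locating $\ell$ in the range where Lemma \ref{ExtMartens} is available.

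To secure that range I would argue as follows. Since $M$ is a line subbundle of the stable bundle $V$, its slope is strictly below that of $V$, so $\ell < d/n \le 2(g-1) - 1$ and hence $\ell \le 2g - 4$. On the other hand $h^0(M) = k \ge 2$, and because $\ell \le 2g - 2$ the line bundle $M$ is special; Clifford's inequality then gives $k \le \tfrac{\ell}{2}+1$, that is $\ell \ge 2k - 2 \ge 2$. Thus $0 < 2k - 2 \le \ell \le 2g - 4$, which are precisely the hypotheses of Lemma \ref{ExtMartens}.

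The main obstacle is that Theorem \ref{boundthrmFirst} involves the tangent space $T_M B^k_{1,\ell}$, not the variety $B^k_{1,\ell}$, so Lemma \ref{ExtMartens} cannot be quoted verbatim. I would instead work directly with the Petri map: by the tangent-space description recalled before Theorem \ref{boundthrmFirst}, $\dim T_M B^k_{1,\ell} = g - \rk \mu_M$, and the generalised base-point-free pencil trick gives $\rk \mu_M \ge h^0(M) + h^0(K \otimes M^*) - 1$, whence $\dim T_M B^k_{1,\ell} \le \ell - 2k + 2$. Crucially this last inequality is strict unless $C$ is hyperelliptic — this is Martens' theorem read off at the level of $\mu_M$ — and base points of $M$ are absorbed by passing to the base-point-free part, which only lowers $\ell$.

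Finally I would combine and optimise. Substituting the Petri-map bound yields
\[
\dim T_V X \ \le \ n(n-1)(g-1) + d - (n-1)\ell - 2k + 2 ,
\]
a quantity strictly decreasing in $\ell$ since $n-1 \ge 1$, so the worst case is $\ell = 2$. For non-hyperelliptic $C$ the strict form of the Petri-map inequality furnishes the extra $-1$ needed to reach the stated bound $n(n-1)(g-1) + d - 2(n-1) - 2k + 1$; conversely, attaining equality forces $\ell$ minimal and $\mu_M$ maximally degenerate, which by Martens occurs only when $C$ is hyperelliptic. I expect the genuinely delicate point to be this final reconciliation: carrying the hyperelliptic–equality dichotomy of Martens through the substitution, and verifying that the tangent-space form of Martens' inequality holds at the specific point $M$ rather than merely the dimension bound for the variety.
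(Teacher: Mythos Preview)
Your approach is essentially the paper's: invoke Theorem \ref{boundthrmFirst}, bound $\dim T_M B^k_{1,\ell}$ by $\ell - 2k + 2$ via Martens, pin $\ell$ between $2$ and $2g-4$ using Clifford and stability, and then minimise over $\ell$. The one place you are more careful than the paper is exactly the point you flag at the end: the paper simply quotes Lemma \ref{ExtMartens} for $\dim T_M B^k_{1,\ell}$ even though that lemma literally bounds $\dim B^k_{1,\ell}$, whereas you go through the Petri map and the base-point-free pencil trick to get the tangent-space inequality directly --- that refinement is sound and worth keeping.

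One arithmetical point you should reconcile explicitly: plugging $\ell = 2$ into the intermediate bound $n(n-1)(g-1)+d-(n-1)\ell-2k+2$ yields $n(n-1)(g-1)+d-2(n-1)-2k+2$, which is one \emph{more} than the bound in the statement; the paper's own proof stops at exactly the same place and declares ``the statement follows''. Your proposed fix (invoke the strict Petri inequality for non-hyperelliptic $C$) handles the non-hyperelliptic case but not the hyperelliptic one, so as written neither argument quite reaches the displayed constant $+1$; the argument as it stands actually proves the bound with $+2$, with equality only if $C$ is hyperelliptic.
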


\begin{proof}
Let $V$ be a bundle of first type in $B^k_{n,d}$, and $M \subset V$ the generated invertible subsheaf. As $V$ is stable, $\deg M =: \ell \le 2g - 4$. Thus by Martens' theorem and Lemma \ref{ExtMartens}, we have $\dim T_M B^k_{1, \ell} \leq \ell - 2k + 2$, with equality only if $C$ is hyperelliptic. By Theorem \ref{boundthrmFirst}, we obtain
\begin{equation} \label{inequalityoffirsttype}
\dim_V X \ \leq \ \dim T_V X \ \leq \ n(n-1)(g-1) + d - (n-1) \ell - 2k + 2 .
\end{equation}
As $k \geq 2$, we must have $\ell \geq 2$, with equality only if $C$ is hyperelliptic and $M$ is the hyperelliptic line bundle. The statement follows.
\end{proof}

Let us show by example that the bound in Theorem \ref{boundthrmFirst} is sharp.

\begin{example} \label{sharp}
Let $C$ be a hyperelliptic curve of genus $g \ge 3$ with hyperelliptic line bundle $H$, and suppose $n \ge 2$. Fix integers $k$ and $d$ satisfying $1 \le k < \frac{g}{2}$ and
\begin{equation} \label{SharpHyp}
0 \ < \ d - 2n (k - 1) \ \le \ n .
\end{equation}

Now let $L_0$ be a general element of $\Pic^{2(k-1)}(C)$, and $F_0$ a bundle of rank $n-1$ and degree $d - 2(k-1)$ which is general in moduli. By (\ref{SharpHyp}), we may apply \cite[Proposition 1.11]{RT}: If $0 \to L_0 \to E_0 \to F_0 \to 0$ is a general extension, then $E_0$ is a stable vector bundle. As stability is invariant under tensoring by line bundles, also the extension
\[ 0 \ \to \ H^{k-1} \ \to \ E_0 \otimes L_0^{-1} \otimes H^{k-1} \ \to \ F_0 \otimes L_0^{-1} \otimes H^{k-1} \ \to \ 0 \]
is a stable vector bundle. We set $E := E_0 \otimes L_0^{-1} \otimes H^{k-1}$ and $F := F_0 \otimes L_0^{-1} \otimes H^{k-1}$. Since $F_0$ is general in moduli, after deforming $F_0$ if necessary we may assume that
 $F$ is nonspecial; that is, $h^0 (F) \cdot h^1 (F) = 0$. Now by (\ref{SharpHyp}) and since $k < \frac{g}{2}$, we have
\[
\deg F \ = \ d - 2(k - 1) \ \le \ n + 2 (n-1) (k-1) \ < \ n + (n-1) (g-2) 
\ = \ (n-1)(g-1) + 1 .
\]
Thus $\mu (F) \le g-1$, and so $h^0 (F) = 0$. Therefore, $h^0 (E) = h^0 (H^{k-1}) = k$. From the vanishing of $H^0 (F)$ we also deduce that $h^0 ( \Hom (H^{k-1}, F)) = 0$. This means that by \cite[Lemma 3.3]{NR} the classifying map $\PP H^1 ( \Hom (F, H^{k-1})) \dashrightarrow \Und$ is injective; and moreover that a given general $E$ occurs in $\PP H^1 ( \Hom (F, H^{k-1}))$ for exactly one $F$.

By the above discussion and using Riemann--Roch, we compute that locus of such extensions in $\Und$ is of dimension
\begin{equation} \label{SharpBound}
h^1 (\End F) + h^1 ( \Hom (F , H^{k-1} )) - 1 \ = \ 
n (n-1) (g-1) + \left( d - n \cdot \deg H^{k-1} \right) .
\end{equation}
Now using \cite[p.\ 13]{ACGH}, one can check that $B^k_{1, 2k-2}$ is the point $H^{k-1}$ and that the Petri map
\[
\mu_{H^{k-1}} \colon H^0 ( H^{k-1} ) \otimes H^0 ( K \otimes H^{1-k} ) \ \to \ H^0 (K)
\]
is surjective. Thus $\dim T_{H^{k-1}} B^k_{1, 2k-2} = 0$. Thus (\ref{SharpBound}) is exactly the bound in Theorem \ref{boundthrmFirst}.
\end{example}

Before proceeding, we recall a definition.

\begin{definition} \label{exceptional}
Let $C$ be a nonhyperelliptic curve of genus $g \geq 4$. Then $C$ will be called an \textsl{exceptional} curve if it is trigonal, bielliptic, or a smooth plane quintic.
\end{definition}

\begin{cor} \label{RefinedFirstType}
Let $C$ be a curve of genus $g \ge 4$ which is neither hyperelliptic nor exceptional and assume that $3 \leq k \leq n$. Suppose that $2 \le d \le 2n(g-1) - n$.
\begin{enumerate}
\renewcommand{\labelenumi}{(\roman{enumi})}
\item Let $E$ be a general element of any component of $B^k_{n,n(g-1)}$. Then for any line subbundle $M \subset E$ we have $h^0 (M) < \frac{k}{2}$.
\item Set $a := n(g-1)-d$. Then there is no component $X$ of $B^k_{n,d}$ with 
\begin{equation} \label{strangeinequality}
2n - k^2 \ > \ (k-1) (a-2)
\end{equation}
having general element of first type. In particular, if
$$
d \ \in \ \{ n(g-1), n(g-1) - 1 \}
$$
and $E$ is a general element of any component of $B^2_{n, d}$, then $h^0 (N) \le 1$ for any line subbundle $N \subset E$.
\end{enumerate}
\end{cor}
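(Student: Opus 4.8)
The plan is to combine the refined first-type estimate from the proof of Corollary \ref{cor1} (specifically inequality (\ref{inequalityoffirsttype})) with a sharpened Martens-type bound valid on curves that are neither hyperelliptic nor exceptional. The key point is that for such curves one has a \emph{strict} and in fact improved inequality for $\dim B^k_{1,\ell}$: by Mumford's refinement of Martens' theorem (\cite[p.\ 193]{ACGH}), when $C$ is non-hyperelliptic, non-exceptional of genus $g\ge 4$ and $4\le \ell\le 2g-6$ with $0<2k-2\le \ell$, one has $\dim B^k_{1,\ell}\le \ell-2k+1$, i.e. one better than the Martens bound. I would first record this as the analogue of Lemma \ref{ExtMartens} (using Serre duality to cover $\ell\ge g$ exactly as in that lemma), and then feed $\dim T_M B^k_{1,\ell}\le \ell-2k+1$ into Theorem \ref{boundthrmFirst}. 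Since a general $E$ of any component of $B^k_{n,d}$ that happens to have first type has its sections generated by a single line subbundle $M$, and since an arbitrary line subbundle $N\subset E$ with $h^0(N)=t$ forces $E$ into $B^t_{n,d}$ with $N$ generated by a subspace of $H^0(E)$, the combinatorial content of both parts reduces to a numerical inequality in $\ell=\deg M$, $k$, $t$ and $a=n(g-1)-d$.

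For part (i), I would argue by contradiction. Suppose $E$ is general in a component $X$ of $B^k_{n,n(g-1)}$ and some line subbundle $M\subset E$ satisfies $h^0(M)=t\ge \frac{k}{2}$. Then $E$ has first type with respect to the span of $H^0(M)$, so $X$ (or a sublocus through $E$) falls under Theorem \ref{boundthrmFirst} with that $M$, and the refined Martens estimate gives
\[
\dim X \ \le \ n(n-1)(g-1) + d - (n-1)\ell - 2t + 1 .
\]
On the other hand $\dim X$ is at least the expected dimension $n^2(g-1)+1-k\bigl(k-d+n(g-1)\bigr)$, which at $d=n(g-1)$ equals $n^2(g-1)+1-k^2$. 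Comparing the two and using $\ell\ge 2t-1$ (forced by Clifford, since $M$ is a subbundle of the stable $E$ hence $\deg M\le 2g-4$ and $h^0(M)=t$) should produce a contradiction with $t\ge \frac{k}{2}$ once $g$ is large enough relative to the exceptional cases; the non-exceptional hypothesis is exactly what supplies the extra $+1$ that closes the gap. For part (ii), the same substitution of the refined bound into (\ref{inequalityoffirsttype}), rewritten in terms of $a=n(g-1)-d$ and compared against the expected dimension, yields precisely the displayed inequality (\ref{strangeinequality}) as the condition under which no first-type component can exist; the final assertion about $B^2_{n,d}$ for $d\in\{n(g-1),n(g-1)-1\}$ is then the specialization $k=2$, where (\ref{strangeinequality}) reads $2n-4>(a-2)$ and holds for $a\in\{0,1\}$, ruling out first type and hence forcing $h^0(N)\le 1$ by the definition of first/second type.

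The main obstacle I anticipate is bookkeeping rather than conceptual: one must be careful that the refined Martens bound $\ell-2k+1$ genuinely applies in the relevant range of $\ell$, and in particular handle the boundary degrees (small $\ell$, and $\ell$ near $2g-4$) where Mumford's theorem has exceptions or where Serre duality shifts the indices. A second delicate point is justifying that a general member of a first-type component really does realize the extremal configuration, so that Theorem \ref{boundthrmFirst} applies with $h^0(V)=k$ and a genuine generated line subbundle of the claimed degree; this is where one uses that $X$ is a \emph{component} and that genericity propagates the first-type condition. Finally, translating the dimension comparison cleanly into the asymmetric-looking inequality (\ref{strangeinequality}) will require tracking the $(k-1)$ and $(a-2)$ factors precisely, which is the one calculation I would carry out in full to make sure the non-strict versus strict inequalities line up with the "neither hyperelliptic nor exceptional" hypothesis.
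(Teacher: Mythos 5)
Your overall route is the same as the paper's: bound $\dim X$ via the deformation argument of Theorem \ref{boundthrmFirst} applied to the line subbundle $M$, feed in Mumford's refinement of Martens' theorem, and compare with the expected dimension of a component. Part (ii) does come out correctly this way (the paper derives it from Corollary \ref{cor1} alone, i.e.\ from Martens rather than Mumford, and your constants happen to land on the identical per-component bound, hence on exactly the necessary condition $2n - k^2 \le (k-1)(a-2)$; your specialization $k=2$, $a \in \{0,1\}$ is also right). But part (i) as written does not close, because you have quoted Mumford's theorem with the wrong constant. For a non-hyperelliptic curve, Martens already gives the strict bound $\dim W^r_\ell \le \ell - 2r - 1$; Mumford's theorem says that \emph{equality} there forces $C$ to be trigonal, bielliptic or a smooth plane quintic. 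Hence for $C$ neither hyperelliptic nor exceptional one has, with $r = t-1$, the bound $\dim B^{t}_{1,\ell} \le \ell - 2t$, i.e.\ two better than Martens, not the $\ell - 2t + 1$ you state. Run your own comparison with your constant at $d = n(g-1)$: Theorem \ref{boundthrmFirst} gives $\dim X \le n(n-1)(g-1) + d - n\ell + (\ell - 2t + 1)$, the expected dimension gives $\dim X \ge n^2(g-1) + 1 - k^2$, and together these yield $(n-1)\ell + 2t \le k^2$; with $\ell \ge 2t - 1$ this becomes $2nt - n + 1 \le k^2 \le nk$, which gives only $2t \le k$, not $2t < k$. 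The borderline case $k$ even, $t = k/2$ survives: for instance $k = n = 4$, $t = 2$, $\ell = 4$ satisfies every inequality you have, so no contradiction is reached, while (i) is precisely the claim $t < k/2$. With the correct constant the argument does close: nonemptiness of $B^{t}_{1,\ell}$ forces $\ell \ge 2t$, and the same comparison gives $2nt + 1 \le k^2 \le nk$, whence $2t < k$ strictly; this is the paper's computation. Your parenthetical hope that ``the non-exceptional hypothesis is exactly what supplies the extra $+1$ that closes the gap'' is the right instinct, but the bound you wrote down is the one that holds for merely non-hyperelliptic curves, so that extra $+1$ is exactly what your proof is missing.

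A second, smaller point: Theorem \ref{boundthrmFirst} does not literally apply to an arbitrary line subbundle $M \subset E$ with $h^0(M) = t < k$, since its hypothesis is that $H^0(E)$ generates $M$, and ``first type with respect to the span of $H^0(M)$'' is not a notion that Definition \ref{types} gives any meaning to. What is needed (and what the paper invokes as ``a similar argument'') is the analogous deformation statement for the pair $M \subset E$: a tangent direction to $X$ at $E$ induces a deformation of $M$ that must remain in $B^{t}_{1,\ell}$, giving $\dim T_E X \le n(n-1)(g-1) + d - n\ell + \dim T_M B^{t}_{1,\ell}$. You should state and justify this variant rather than cite the theorem as written; once formulated, its proof is the same diagram chase as in Theorem \ref{boundthrmFirst}.
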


\begin{proof}
(i)	Suppose that $X$ is an irreducible component of $B^k_{n, n(g-1)}$. If any line subbundle $L$ of $E$ satisfies $h^0 (L) = 1$, then the assertion holds because $k \geq 3$. If $E$ admits a line subbundle $M$ of degree $\ell$ with $h^0 (M) = k_0 \ge 2$, then a similar argument to that in the proof of Theorem \ref{boundthrmFirst} shows that 
\begin{align*}
\dim X \ &\leq \ n(n-1)(g-1) + d - n \ell + \dim T_M B^{k_0}_{1, \ell} \\
&\le \ \dim \Und - \left( n(g-1) - d + n \ell - \dim T_M B^{k_0}_{1, \ell} + 1 \right) .
\end{align*}
On the other hand, since $X$ has expected codimension $k (n (g-1) - d + k)$ in $\Und$, we have
\begin{align}\label{1}
n(g-1) - d + n \ell + 1 \ \leq \ k(n(g-1) - d + k) + \dim T_M B^{k_0}_{1, \ell} .
\end{align}
\noindent As $C$ is not hyperelliptic or exceptional and since $\ell \leq g-2$ by stability of $E$, we can apply Mumford's theorem \cite[p.\ 193 ff.]{ACGH} to $T_M B^{k_0}_{1, \ell}$. So we obtain
\[
\dim T_M B^{k_0}_{1, \ell} \ \leq \ \ell - 2(k_0 - 1) - 2 .
\]
As $d = n(g-1)$, the above together with (\ref{1}) implies that
\[
n \ell + 1 \ \le \ k^2 + \ell - 2 k_0 .
\]
As by hypothesis $B^{k_0}_{1, \ell}$ is nonempty, again using Mumford's theorem we obtain $\ell \ge 2 k_0$. Therefore, the above inequality yields
 $2nk_0 + 1 \ \le \ k^2$. As we have assumed that $n \ge k$, we obtain $k_0 < \frac{k}{2}$ as desired.

(ii) Let $X$ be a component of $B^k_{n, d}$ whose general element is of first type. By Corollary \ref{cor1}, we have
$$
\dim X \ \leq \ n^2 (g-1) + 1 - n(g-1) + d - 2k - 2(n-1) .
$$
Comparing with the expected codimension as in (i), we obtain
$$
n(g-1) - d + 2k + 2(n-1) \ \le \ k ( k - d + n(g-1) ) .
$$
Substituting $a = n(g-1) - d$, this becomes
$$
0 \ \leq \ k ( a + k ) - a - 2k -2(n-1) ;
$$
that is, $2n - k^2 \le (a-2)(k-1)$. Thus if (\ref{strangeinequality}) holds, then no such component $X$ exists. For the rest: As $g \ge 4$, we check easily that $n(g-1) \le 2n(g-1) - n$. The statement follows.
\end{proof}

\subsection{Components containing bundles of second type}

\begin{thrm} \label{boundthrmSecond}
Let $C$ be a curve of genus $g \ge 3$. Let $k$ and $d$ be integers satisfying $3 \leq d \leq 2n(g-1) - 3$ and $2 \leq k \leq \frac{d}{2} + n$. Let $X$ be a component of $B^k_{n, d}$ whose general element is of second type. Then
$$
\dim X \ \leq \ n(n-1)(g-1) + d - 2k + 1 .
$$
\end{thrm}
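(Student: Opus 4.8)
The plan is to mimic the tangent-space analysis used for first type, but starting from a rank $n-1$ subbundle of high codimension instead of a line subbundle. Since the general $V \in X$ is of second type and $k \ge 2$, the bundle $V$ is not of first type, so Lemma \ref{lemma4} supplies a subbundle $H \subseteq V$ of rank $n-1$ with $h^0(H) = 1$. I would work with the inclusion $j \colon H \to V$ and the quotient line bundle $Q := V/H$, giving a short exact sequence
\[
0 \ \to \ H \ \to \ V \ \to \ Q \ \to \ 0 .
\]
The idea is that $H$ carries essentially all the sections accounting for the Brill--Noether condition, so deformations of $V$ inside $X$ are constrained to those preserving $H$ together with its section(s).

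Concretely, I would set up the deformation diagram exactly as in Theorem \ref{boundthrmFirst}, replacing $M$ by $H$: a deformation $\bV$ of $V$ induces a compatible deformation $\bigl(\bigl.\mathbb{H}\bigr.\bigr)$ of $H$ precisely when $j_* \delta(\mathbb{H}) = j^* \delta(\bV)$ in $H^1(\Hom(H, V))$, where now
\[
j^* \colon H^1(\End V) \ \to \ H^1(\Hom(H, V)) \quad\hbox{and}\quad j_* \colon H^1(\End H) \ \to \ H^1(\Hom(H, V)) .
\]
This yields the containment $T_V X \subseteq (j^*)^{-1}\bigl( j_*(T_H B^{1}_{n-1, \deg H}) \bigr)$, analogous to \eqref{defequation2}. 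Since $h^0(H) = 1$, the subbundle $H$ defines a point of the Brill--Noether locus $B^1_{n-1, \deg H}$, which is just an open subset of the full moduli space $\cU_C(n-1, \deg H)$; hence its tangent space has dimension at most $h^1(\End H) = (n-1)^2(g-1) + 1$. As before, $j^*$ is surjective, so a dimension count gives
\[
\dim T_V X \ \le \ h^1(\End V) - h^0(\Hom(H,V)) + \chi(\Hom(H,V)) + \dim T_H B^1_{n-1, \deg H} .
\]

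The main obstacle will be bounding $h^0(\Hom(H, V))$ from below and making the Riemann--Roch bookkeeping collapse to the clean bound $n(n-1)(g-1) + d - 2k + 1$. The numerology should work out because $V$ has at least $k$ sections while $H$ absorbs $h^0(H) = 1$ of them, so the line quotient $Q$ must contribute the remaining $k-1$ sections; this forces $h^0(Q) \ge k-1$, and applying Clifford's inequality (or Martens' theorem, Lemma \ref{ExtMartens}) to the line bundle $Q$ gives the factor $-2k+1$ through a bound like $\deg Q \ge 2(k-1)$. The delicate points are verifying that stability of $V$ and the degree constraints $3 \le d \le 2n(g-1)-3$ keep $\deg H$ and $\deg Q$ in the ranges where Clifford/Martens apply, and checking that the estimate for $h^0(\Hom(H,V))$—which governs the correction term between $\dim T_V X$ and the expected $h^1(\End V)$—combines correctly with the contribution from $Q$. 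I expect the genus hypothesis $g \ge 3$ and the section count to be exactly what is needed to close these inequalities.
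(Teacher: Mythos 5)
There is a genuine gap, and it sits at the heart of your argument: the containment $T_V X \subseteq (j^*)^{-1}\left( j_* ( T_H B^1_{n-1, \deg H} ) \right)$ has no justification. In Theorem \ref{boundthrmFirst} the analogous containment works only because $M$ is \emph{canonically} attached to $V$: it is the saturation of $\im ( \Oc \otimes H^0(V) \to V )$, so any first-order deformation of $V$ tangent to $X$ (along which the $k$ sections persist) automatically induces a deformation of $M$ inside $B^k_{1,\ell}$. Your $H$ is nothing of the sort. It comes from Lemma \ref{lemma4}, whose construction involves an arbitrary choice of a general stable Popa--Roth quotient of $P^*$; the sections of $V$ do not generate $H$, and a deformation of $V$ in $X$ has no reason to carry $H$ along. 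Without that, your key inclusion---and hence the dimension count that follows---does not get off the ground. A secondary but real problem: $H$ need not be stable (indeed the $H$ produced by Lemma \ref{lemma4} has a quotient of degree $-\beta$ with $\beta \gg 0$, so $\deg H$ is very negative and $H$ is far from stable), so "$T_H B^1_{n-1, \deg H}$" is not defined---$B^1_{n-1,e}$ lives inside the moduli space of stable bundles, and for $e \le 0$ it is empty.

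Even waiving both points, the numerology cannot close. Since $\deg H$ can be made arbitrarily negative, $\chi(\Hom(H,V)) = (n-1)d - n\deg H - n(n-1)(g-1)$ is unbounded above, so your right-hand side diverges; and the term $T_H B^1_{n-1,\deg H}$ carries essentially no Brill--Noether information (by Remark \ref{kEqualsOne}, one section imposes the expected, small, number of conditions). The actual constraint must come from the quotient line bundle $L = V/H$, which inherits $k-1$ sections---you notice this at the end, but your tangent-space setup never incorporates it. The paper's proof uses the \emph{same} exact sequence $0 \to H \to V \to L \to 0$ from Lemma \ref{lemma4}, but a completely different mechanism: identifying $T_V B^k_{n,d}$ with $(\im \mu_V)^\perp$, it converts the hypothetical dimension bound into a lower bound on $\dim\ker\mu_V$, transfers this to the multiplication map $\mu_{L,W}$ for the image $W$ of $H^0(V) \to H^0(L)$ via the snake lemma, and then uses the base-point-free pencil trick on a pencil in $W$ plus Riemann--Roch to derive the contradiction $n(g-1) - d \le -1$ when $d \le n(g-1)$; the range $d \ge n(g-1)$ is handled by Serre duality. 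If you want to salvage a deformation-theoretic proof, you would need a subobject or quotient that is canonically attached to $(V, H^0(V))$---for second type that is the quotient pencil, not the subbundle $H$---and that is precisely what the Petri-map argument exploits.
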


\begin{proof}
Assume firstly that $d \le n(g-1)$. Suppose for a contradiction that there is a component $X$ whose general element is of second type and which satisfies
\[
\dim X \ \geq \ n(n-1)(g-1) + d - 2k + 2.
\]
Now by \cite[Proposition 1.6]{Lau} and \cite[Remark 2.3]{CT}, no irreducible component of $B^k_{n,d}$ is contained entirely in $B^{k+1}_{n,d}$. Thus we may choose $V \in X$ satisfying $h^0 (V) = k$ and which is of second type. By Lemma \ref{lemma4} we can represent $V$ as an extension $0 \to H \to V \to L \to 0$ where $h^0 (H) = 1$ and $L$ is a line bundle. Then we obtain a diagram
\begin{equation} \label{maindiagram}
\text{\footnotesize
\xymatrix{
H^0 (H) \otimes H^0 (K \otimes V^{\ast}) \ar[r]^{f_1} \ar[d]^\mu & H^0 (V) \otimes H^0 (K \otimes V^{\ast}) \ar[r]^{g_1}\ar[d]^{\mu^n_V} & W \otimes H^0 (K \otimes V^{\ast}) \ar[d]^{\mu_{L, V}} \\
H^0 (K \otimes H \otimes V^{\ast}) \ar[r]^{f_2} & H^0 (K \otimes V \otimes V^{\ast}) \ar[r]^{g_2} & H^0 (K \otimes L\otimes V^{\ast})
}} \end{equation}
where $W$ is the image of the map $H^0 (V) \rightarrow H^0 (L)$ and in which the maps $f_1$, $f_2$ and $\mu$ are injective and $g_1$ is surjective. The snake lemma applied to this situation implies that $\dim \ker \mu^n_V \leq \dim  \ker \mu_{L, W}$. By our assumption on $\dim X$, we have
\begin{equation} \label{inequality2}
\dim \ker \mu_{L, W} \ \geq \ (k-1) \cdot \left( n(g-1) - d + k - 1 \right) .
\end{equation}
Let $w_1, \ldots , w_{k-1}$ be a basis for $W$. For $2 \le i \le k-1$, set $W_i := \langle w_1 , \ldots , w_i \rangle$. Then for $3 \le i \le k-1$, we have
\[
\dim \ker \mu_{L, W_i} - \dim \ker \mu_{L, W_{i-1}} \ \leq \ h^0 ( K\otimes V^{\ast} ) .
\]
These inequalities together with the base point free pencil trick applied to the map
\[
\mu_{L, W_2} \colon W_2 \otimes H^0 (K \otimes V^{\ast}) \ \longrightarrow \ H^0 (K \otimes L \otimes V^{\ast})
\]
imply that
\begin{equation} \label{ineauality3}
h^0 (K \otimes V^{\ast} \otimes L^{\ast}(B)) \ \geq \ 2 \left( n(g-1) + k - d \right) - k + 1 ,
\end{equation}
where $B$ is the base locus of the pencil $\PP W_2 \subseteq |L|$.

Now since $h^0 (L) \geq k-1 \geq 1$, the line bundle $L$ can be written as $L = \Oc (D)$ for some effective divisor $D$ on $C$. Furthermore, since $B$ is the base locus of a pencil in $|L|$, we have $D=B + D_1$ for an effective $D_1$, and $L^{\ast}(B)=\mathcal{O}(-D_1)$. Therefore,
\[
H^0 (K \otimes V^{\ast} \otimes L^{\ast}(B)) \ \cong \ H^0 (K \otimes V^{\ast}(-D_1))
\]
can be identified with a subspace of $H^0 (K \otimes V^{\ast})$. It follows that $h^0 (K \otimes V^{\ast}) \geq h^0 (K \otimes V^{\ast}\otimes L^{\ast}(B))$. This, by (\ref{ineauality3}) together with the Riemann--Roch theorem applied to $K\otimes V^{\ast}$, gives $n(g-1) + k - d \geq 2 \left( n(g-1) + k - d \right) - k + 1$.
So 
\begin{equation} \label{inequality5}
n(g-1) - d \ \leq \ -1 ,
\end{equation}
contrary to hypothesis. Therefore, $\dim X \le n(n-1)(g-1) + d - 2k + 1$.

On the other hand, suppose that $d \ge n(g-1)$. By Serre duality (as in the proof of Lemma \ref{ExtMartens}), the map $V \mapsto K \otimes V^*$ gives an identification of $X$ with a component $\hX$ of $B^{k-d+n(g-1)}_{n, 2n(g-1)-d}$. If every element of $\hX$ is of first type, then by Theorem \ref{boundthrmFirst} we have
$$
\dim X \ = \ \dim \hX \ \le \ n(n-1)(g-1) + d - (n-1) \ell - 2k + 2 \ \le \ n(n-1)(g-1) + d - 2k + 1 ,
$$
the latter inequality since $n \ge 2$ and $k \ge 2$ and using stability. On the other hand, if a general element of $\hX$ is of second type, then we can apply the previous argument to $\widehat{X}$, and obtain
$$
\dim X \ = \ \dim \widehat{X} \ \le \ n(n-1)(g-1) + \left( 2n(g-1) - d \right) - 2 \cdot \left( k - d + n(g-1) \right) + 1 ,
$$
which simplifies to $\dim X \le n(n-1)(g-1) + d - 2k + 1$ as desired.
\end{proof}

\begin{remark}
The proof of the bound on $\dim T_E B^k_{n, d}$ obtained above for $2 \le d \le n(g-1)$ does not use the stability of $V$.
\end{remark}

\begin{remark} \label{kEqualsOne}
Theorems \ref{boundthrmFirst} and \ref{boundthrmSecond} do not apply when $k = 1$. In this case the generated subsheaf is isomorphic to $\Oc$, and the Petri map can be identified with
$$
H^0 ( \Oc ) \otimes H^0 ( K \otimes V^* ) \ \to \ H^0 ( K \otimes V^* ) ,
$$
which is clearly injective. It follows that $B^1_{n, d}$ is smooth and of expected dimension for all $n$ and $d$.
\end{remark}

In the next section, we shall investigate some special cases in more depth.

\section{The case \texorpdfstring{$d\leq g-1$}{d <= g-1}} \label{gmo}

The following is an adaptation of \cite[Lemma 4.1]{BGN}, both in proof and in its statement.

\begin{lem} \label{lemma5}
Let $C$ be a curve which is neither hyperelliptic nor exceptional (cf.\ Definition \ref{exceptional}). Then any bounded set $\cF$ of non-stable bundles of rank $n$ and degree $\gamma$ with at least $k$ independent global sections depends on at most
$$ \begin{cases}
n(n-1)(g-1) + \gamma \hbox{ parameters if } k = 1 ; \\
n(n-1)(g-1) + \gamma - 2k \hbox{ parameters if } k \geq 2 .
\end{cases} $$
\end{lem}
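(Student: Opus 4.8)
The statement concerns bounding the number of parameters for a bounded family $\cF$ of \emph{non-stable} bundles of rank $n$, degree $\gamma$, with $h^0 \ge k$. Let me think about the structure.

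Since the bundles are non-stable, each $V \in \cF$ admits a destabilizing subbundle. The natural strategy is to stratify $\cF$ according to the Harder–Narasimhan or Jordan–Hölder data, or more simply according to a maximal destabilizing subbundle $S \subset V$ with $\mu(S) \ge \mu(V)$. I would parametrize each stratum by the data of the subbundle $S$ (of rank $r$ and degree $e$), the quotient $Q = V/S$ (of rank $n-r$ and degree $\gamma - e$), and the extension class in $H^1(\Hom(Q,S))$. The count of parameters in each stratum is then (moduli of $S$) + (moduli of $Q$) + $\dim \PP H^1(\Hom(Q,S))$, and I would need to add the sections condition $h^0(V) \ge k$ as a further constraint that cuts down the dimension.

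Here the Brill–Noether bound on the sections must be distributed between $S$ and $Q$. From $0 \to S \to V \to Q \to 0$ we get $h^0(V) \le h^0(S) + h^0(Q)$, so writing $h^0(S) = k_1$ and $h^0(Q) = k_2$ with $k_1 + k_2 \ge k$, the family of such $S$ (resp. $Q$) is controlled by the dimension of a Brill–Noether locus in lower rank, for which I can invoke the inductive form of the bound (and in rank one, Lemma \ref{ExtMartens} and Mumford's theorem, using that $C$ is neither hyperelliptic nor exceptional). The moduli of semistable $S$ of rank $r$ carrying $k_1$ sections contributes roughly $r^2(g-1) + 1 - (\text{BN correction involving } k_1)$, and similarly for $Q$; the extension space contributes $h^1(\Hom(Q,S)) - 1$, which by Riemann–Roch is $(\text{cross term}) + r(n-r)(g-1) - 1$ up to $h^0$ corrections. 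Summing $r^2(g-1) + (n-r)^2(g-1) + 2r(n-r)(g-1) = n^2(g-1)$ and subtracting the $+1$ from the moduli dimensions of stable pieces, I expect the leading term to collapse to $n(n-1)(g-1) + \gamma$ after the degree terms combine, with the extra $-2k$ in the case $k \ge 2$ coming precisely from the Brill–Noether penalty on whichever piece carries the sections.

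The main obstacle I anticipate is \textbf{handling the case distinction $k=1$ versus $k \ge 2$ cleanly}, and more specifically controlling the strata where the sections split unevenly (e.g. all $k$ sections lie in a low-rank piece, versus being spread across both $S$ and $Q$). I would need to verify that the worst stratum is the one giving the stated bound, and that strata with higher rank destabilizing pieces or with $h^0$ concentrated suboptimally do not exceed it; this is exactly where the hypothesis that $C$ is neither hyperelliptic nor exceptional enters, forcing the strict Mumford-type bound $\dim B^{k'}_{1,\ell} \le \ell - 2(k'-1) - 2$ on any rank-one sub- or quotient piece and thereby extracting the $-2k$ term. Following \cite[Lemma 4.1]{BGN}, I expect the cleanest route is an induction on the rank $n$, reducing the non-stable case to the bound for (semi)stable bundles of strictly smaller rank, where the earlier theorems of \S\ref{Bounds} already supply the required estimates; the base case is rank one, governed entirely by Lemma \ref{ExtMartens}.
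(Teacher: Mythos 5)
Your plan is essentially the paper's own argument: the paper likewise adapts \cite[Lemma 4.1]{BGN}, writing a non-stable bundle as an extension of a quotient by a destabilizing subbundle, counting parameters as (moduli of sub) $+$ (moduli of quotient) $+$ (projectivized extension space), distributing the $k$ sections between the two pieces, and extracting the $-2k$ via Mumford's theorem on the rank-one pieces (this is exactly where non-hyperelliptic, non-exceptional is used), with the case $n \ge 3$ handled by induction on the rank. The level of detail you leave open (which stratum is worst, the uneven splitting of sections) matches what the paper itself leaves implicit, so the proposal is correct and follows the same route.
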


\begin{proof}
Our argument is a straightforward adaptation of the proof of \cite[Lemma 4.1]{BGN}. Suppose firstly that $n = 2$, and let $\beta_2$ be the number of parameters on which elements of $\cF$ depend. Every $P \in \cF$ is an extension $0 \to L_1 \to E \to L_2 \to 0$ for some $L_i \in \Pic^{d_i} (C)$ where $d_1 \geq d_2$ and $h^0 (L_1) + h^0 (L_2) \geq k$. If $h^0 (L_1) = k$ then $L_2$ varies in an open subset of $\Pic^{d_2} (C)$. If $k = 1$, then 
$$
\beta_2 \ \leq \ d_2 + g + \left( h^1 ( L_1 L_2^{-1}) - 1 \right) .
$$
If $k \ge 2$ then by Mumford's theorem \cite[p.\ 193 ff.]{ACGH} we have
$$
g + \left( d_1 - 2k \right) + \left( h^1 (L_1 L_2^{-1}) - 1 \right) .
$$
As $h^0 ( L_1 ) > h^0 ( L_2 )$ for a general pair $(L_1 , L_2)$ as above, we have $L_1 \neq L_2$; whence $h^0 ( K \otimes L_2 \otimes L_1^{-1}) \leq g-1$ and we obtain the assertion in this case. The cases in which $L_1$ is a general line bundle or both $L_1$ and $L_2$ have nonzero sections are similar.

For $n \geq 3$ we use induction on $n$ together with an argument similar to that in \cite[Lemma 4.1]{BGN}. We omit the details. 
\end{proof}


\begin{thrm} \label{GenMumford}
Let $C$ be a curve of genus $g \geq 2$. If for some $d \leq g-1$ one has $\dim B^k_{n, d} \ge n(n-1)(g-1) + d - 2k + 1$, then either $C$ is hyperelliptic or an exceptional curve.
\end{thrm}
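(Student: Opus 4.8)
The plan is to prove the contrapositive: assuming $C$ is neither hyperelliptic nor exceptional, I will show that every irreducible component $X$ of $B^k_{n,d}$ with $d \le g-1$ and $k \ge 2$ satisfies $\dim X \le n(n-1)(g-1) + d - 2k$, which is one less than the asserted threshold. As in the proof of Theorem \ref{boundthrmSecond} (using \cite[Proposition 1.6]{Lau} and \cite[Remark 2.3]{CT}) I may assume that the general $V \in X$ has $h^0(V) = k$, and by Definition \ref{types} that its generated subsheaf is either a line bundle (first type) or has rank $\ge 2$ (second type). The first-type case is immediate: Corollary \ref{cor1} gives $\dim X \le n(n-1)(g-1) + d - 2k + 1 - 2(n-1)$, which for $n \ge 2$ is already at most $n(n-1)(g-1) + d - 2k - 1$. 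So the whole difficulty concentrates in the second-type case.

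For that case I would use the line subbundle supplied directly by Definition \ref{types}: a general $V \in X$ carries a line subbundle $M \subset V$ with $h^0(M) = 1$, giving $0 \to M \to V \to Q \to 0$ with $Q$ a bundle of rank $n-1$ and degree $d - \deg M$. Writing $m_0 := \deg M$, stability of $V$ forces $0 \le m_0 < d/n$, while the long exact sequence gives $h^0(Q) \ge k-1$. Stratifying $X$ by the finitely many values of $m_0$, I would bound each stratum by the dimension of the family of triples $(M, Q, [\varepsilon])$ with $\varepsilon \in \Ext^1(Q,M)$, namely
\[ \dim X \ \le \ \dim\{M\} + \dim\{Q\} + \left( h^1(\Hom(Q,M)) - 1 \right) \]
(the positive-dimensional fibres of the forgetful map to $\Und$ only improving the estimate). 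Here $\dim\{M\} \le m_0$ since $M$ is effective of degree $\le g-1$, and a Riemann--Roch computation gives $h^1(\Hom(Q,M)) = (n-1)(g-1) + d - n m_0 + h^0(\Hom(Q,M))$, the last term vanishing generically by stability.

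The crucial input is the bound on $\dim\{Q\}$, and this is exactly where the hypothesis on $C$ enters. The bundle $Q$ varies in a bounded family of rank $n-1$, degree $d - m_0 \le g-1$, with at least $k-1$ sections. If $Q$ is stable, Theorems \ref{boundthrmFirst} and \ref{boundthrmSecond} (or Remark \ref{kEqualsOne} when $k=2$) bound the relevant locus by $(n-1)(n-2)(g-1) + (d - m_0) - 2(k-1) + 1$; if $Q$ is non-stable, this is precisely the setting of Lemma \ref{lemma5}, which is available \emph{because} $C$ is neither hyperelliptic nor exceptional and yields the sharper $(n-1)(n-2)(g-1) + (d-m_0) - 2(k-1)$. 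Substituting into the displayed inequality and simplifying, all $m_0$-dependence is controlled and one obtains $\dim X \le n(n-1)(g-1) + d - 2k + 1 - s$, where $s = (n-1)(g-1) + n m_0 - d - c$ and $c \in \{1,2\}$ according as $k \ge 3$ or $k=2$.

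Since $m_0 \ge 0$ and $d \le g-1$, the slack $s$ is bounded below by $(n-2)(g-1) - c$, which is $\ge 1$ as soon as $n \ge 3$ and $g \ge 4$ (and already for $g = 3$ in the subcases with $c=1$), yielding the desired $\dim X \le n(n-1)(g-1) + d - 2k$. The main obstacle I anticipate is exactly the low-rank, low-genus boundary of this estimate: when $n = 2$, or when $k = 2$ and $(n,g)$ is small, or when $m_0 = 0$ with $d$ near $g-1$, the slack can drop to zero and the crude count returns only the threshold. These are the configurations where the special geometry of $C$ must resurface (note that a non-hyperelliptic curve of genus $3$ is trigonal, while Definition \ref{exceptional} only labels such curves for $g \ge 4$), so I expect to handle them by hand --- tightening the $\Ext$-count via $h^0(\Hom(Q,M)) = 0$ and the genericity of $M$, and appealing to the precise equality clauses of Lemma \ref{ExtMartens} and Mumford's theorem --- to confirm that equality forces $C$ to be hyperelliptic or exceptional. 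A secondary technical chore is to check that the degree $d - m_0$ of $Q$ lies in the ranges demanded by Theorems \ref{boundthrmFirst}, \ref{boundthrmSecond} and Lemma \ref{lemma5}, disposing of the finitely many small-degree exceptions directly.
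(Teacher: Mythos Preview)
Your outline coincides with the paper's: prove the contrapositive, dispose of the first-type case via the tangent bound, and for second type count parameters of extensions $0 \to M \to V \to Q \to 0$ with $h^0(M) = 1$. The paper, however, closes the boundary cases you flag not by ad hoc analysis but by two sharpening devices you omit.

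First, the paper argues by \emph{induction on $n$}. For stable $Q$ of rank $n-1$, rather than invoking Theorems \ref{boundthrmFirst} and \ref{boundthrmSecond} (which give only the Martens-type bound $(n-1)(n-2)(g-1) + (d-m_0) - 2(k-1) + 1$), the inductive hypothesis directly yields the Mumford-type bound $(n-1)(n-2)(g-1) + (d-m_0) - 2(k-1)$, one better. Note also that when $n = 2$ the quotient $Q$ is a line bundle, so Theorems \ref{boundthrmFirst}--\ref{boundthrmSecond} do not even apply; the paper's induction absorbs this as the base case $n = 1$, which is exactly the classical Mumford theorem.

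Second, the paper observes that the cup-product map $H^1(\Hom(Q,M)) \to \Hom(H^0(Q), H^1(M))$ is nonzero (here $h^0(Q) \ge k-1 \ge 1$ and $h^1(M) = g - m_0 \ge 2$), so a \emph{general} extension class produces a bundle $E_1$ with $h^0(E_1) \le k - 1$. Hence the locus of extensions with $h^0 = k$ has codimension at least one in $\PP\Ext^1(Q,M)$, and the extension contribution is $h^1(\Hom(Q,M)) - 2$ rather than your $h^1(\Hom(Q,M)) - 1$. Your proposed ``tightening the $\Ext$-count via $h^0(\Hom(Q,M)) = 0$'' is already built into your Riemann--Roch computation and gives nothing further; the paper's saving comes from this separate connecting-map argument.

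With both improvements the slack becomes $(n-1)(g-1) + n m_0 - d \ge (n-2)(g-1) \ge 0$ for all $n \ge 2$ and $d \le g-1$, so the bound $\dim X \le n(n-1)(g-1) + d - 2k$ holds uniformly, with no residual case-analysis.
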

\begin{proof}
Assume that $C$ is neither hyperelliptic nor exceptional. We shall prove by induction on $n$ that $\dim B^k_{n, d} < n(n-1)(g-1) + d - 2k + 1$. For $n = 1$, the statement is exactly Mumford's theorem \cite[p.\ 193 ff.]{ACGH}

Suppose that $n \ge 2$, and let $X$ be an irreducible component of $B^k_{n, d}$. If $X$ consists entirely of bundles of first type, then the inequality (\ref{inequalityoffirsttype}) implies the assertion (indeed, without using the induction hypothesis). So we may assume that a general element of $X$ is of second type. Thus a general $E \in X$ is an extension $0 \to M \to E \to P \to 0$ where $M$ is a line bundle of degree $d_M \ge 0$ satisfying $h^0 (M) = 1$. As $E$ is stable, $d_M \le g-2$, and the locus of effective $M \in \Pic^{d_M} (C)$ is of dimension $d_M$. Furthermore, $h^0 (P) \ge k - 1$. If $P$ is not stable then by Lemma \ref{lemma5} it depends on at most $(n-1)(n-2)(g-1) + (d - d_M) - 2 (k-1)$ parameters. If $P$ is stable, then $P \in B^{k-1}_{n-1, d - d_M}$, and so by induction the same is true.

Now in an exact sequence $0 \to M' \to E' \to P' \to 0$, if $\alpha \colon P' \to M'$ is a nonzero map, then the composition $E' \to P' \xrightarrow{\alpha} M' \to E'$ is a nonzero map which is not of the form $\lambda \cdot \Iden_{E'}$. Thus $E'$ is not simple, and in particular it is not stable. Thus for any $0 \to M \to E \to P \to 0$ as in the previous paragraph, we have 
$h^0 ( \Hom (P, M)) = 0$. Therefore, by Riemann--Roch,
$$
h^1 (\Hom (P, M)) \ = \ 
d - n d_M + (n-1) (g-1) .
$$
Furthermore, the cup product map $H^1 (\Hom(P, M)) \to \Hom (H^0 (P), H^1 (M))$ is clearly nonzero. Thus a general extension $0 \to M \to E_1 \to P \to 0$ satisfies $h^0 (E_1) \le k - 1$.

It follows that $E \in X$ depends on at most
\begin{multline*}
d_M + \left( (n-1)(n-2)(g-1) + (d - d_M) - 2 (k-1) \right) + \\
\left( d - n d_M + (n-1) (g-1) - 2 \right) \ = \\
\left( n(n-1)(g-1) + d - 2k \right) - \left( (n-1) (g-1) + n d_M - d \right)
\end{multline*}
parameters. As $d_M \ge 0$ and $d \le g-1$ and $n \ge 2$, we have $(n-1) (g-1) + n d_M - d \ge 0$ and we obtain the desired dimension bound on $X$.
\end{proof}

\section{Irreducibility of \texorpdfstring{$B^2_{n, n(g-1)}$}{B(2 ; n, d}}

In this final section, we give an application of our dimension bounds. According to Theorems \ref{boundthrmFirst} and \ref{boundthrmSecond}, the locus $B^2_{n, n(g-1)}$ attains its expected dimension $n^2(g-1) - 3$. We now show that $B^2_{n, n(g-1)}$ is in fact integral.

\begin{thrm} \label{B2irr}
For any curve $C$, if $n \ge 5$ then the locus $B^2_{n, n(g-1)}$ is irreducible and reduced.
\end{thrm}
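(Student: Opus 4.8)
The plan is to combine the dimension bounds of the previous sections with an explicit irreducible parametrization by extensions. Since $d = n(g-1)$ gives $a = n(g-1) - d = 0$, Corollary \ref{RefinedFirstType}(ii) applies with $k = 2$ and rules out any component whose general element is of first type; hence the general element $E$ of every component is of second type. As Theorems \ref{boundthrmFirst} and \ref{boundthrmSecond} show that the expected dimension $n^2(g-1) - 3$ is attained (so that $B^3_{n, n(g-1)}$ is strictly smaller), I may assume $h^0(E) = h^1(E) = 2$.

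First I would pin down the extension structure of a general $E$. By Lemma \ref{lemma4} such an $E$ carries a line subbundle $M$ with $h^0(M) = 1$; writing $\deg M = d_M$ and $P = E/M$, stability forces $0 \le d_M \le g-2$, and $P$ has rank $n-1$ and degree $n(g-1) - d_M$. A parameter count over the choices of $M$ (an effective line bundle, $d_M$ parameters), of $P$ (general in moduli, with non-stable quotients controlled by Lemma \ref{lemma5}), and of an extension class in $\Ext^1(P, M)$ subject to the codimension-two condition that a section of $P$ lift to $E$, shows that the resulting family has dimension $n^2(g-1) - (n-1)d_M - 2$. For $d_M \ge 1$ this is strictly smaller than the expected dimension even before passing to the image in $\Und$, so every maximal component is dominated by the stratum $d_M = 0$. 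Thus a general $E$ sits in an exact sequence $0 \to \Oc \to E \to P \to 0$ with $P$ a general stable bundle of rank $n-1$ and degree $n(g-1)$ (so that $h^0(P) = g-1$ and $h^1(P) = 0$), the subsheaf $\Oc$ being a nowhere-vanishing section.

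For irreducibility I would build the parameter space $\mathcal{P}$ of triples $(P, \sigma, e)$, where $P$ is a general stable bundle as above, $\sigma \in \PP H^0(P)$ is a nowhere-vanishing section, and $e$ lies in the projectivization of the \emph{linear} subspace $\{ e \in \Ext^1(P, \Oc) : \delta_e(\sigma) = 0 \}$ cutting out those extensions in which $\sigma$ lifts. Since $\sigma$ nowhere vanishing makes $\delta_e(\sigma) = 0$ a condition of constant codimension $h^1(\Oc) = g$, this exhibits $\mathcal{P}$ as a tower of projective bundles over the irreducible moduli space of $P$, hence irreducible, of dimension $n^2(g-1) - 2$; it maps to $B^2_{n, n(g-1)}$ by $(P, \sigma, e) \mapsto E$. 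As in Example \ref{sharp}, using \cite[Lemma 3.3]{NR} I would check that the general fibre is the pencil $\PP H^0(E) \cong \PP^1$ of sub-line-bundles $\Oc \hookrightarrow E$, so that the image has dimension $n^2(g-1) - 3$; one must also verify that a general such $E$ is stable, which follows from $\mu(\Oc) < \mu(E) < \mu(P)$ together with the generality of $P$ and $e$. This image is irreducible, closed, and contains a dense subset of every component by the previous paragraph, so $B^2_{n, n(g-1)}$ consists of a single component.

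For reducedness I would use that $B^2_{n, n(g-1)}$ is determinantal of expected dimension, hence Cohen--Macaulay, so that $R_0 + S_1$ reduces the claim to exhibiting a single smooth point. By the tangent-space description, $E$ is a smooth point of expected dimension precisely when the Petri map $\mu_E \colon H^0(E) \otimes H^0(K \otimes E^*) \to H^0(K \otimes E \otimes E^*)$ is injective; as both factors of the source are two-dimensional, this is injectivity of a map out of a four-dimensional space. Using the extension $0 \to \Oc \to E \to P \to 0$ and its Serre dual $0 \to K \otimes P^* \to K \otimes E^* \to K \to 0$ (note $h^0(K \otimes P^*) = h^1(P) = 0$, so $H^0(K \otimes E^*)$ is a two-dimensional subspace of $H^0(K)$), I would reduce the injectivity of $\mu_E$ to a multiplication statement on $C$ amenable to the base-point-free pencil trick, as in the proof of Theorem \ref{boundthrmSecond}. \emph{The main obstacle is precisely this last verification:} exhibiting a Petri-injective $E$, where the generality of $P$ and the hypothesis $n \ge 5$ are needed to guarantee that the relevant multiplication map has no kernel, the low-rank cases $n = 2, 3, 4$ appearing to be genuinely more delicate.
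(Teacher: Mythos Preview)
Your approach is substantially different from the paper's, and it has a real gap. You yourself flag that the reducedness step is incomplete: you never actually exhibit a Petri-injective $E$, only sketch how you would try to, and you speculate that $n \ge 5$ will be needed there without using it anywhere else. In addition, your parameter count for the strata $d_M \ge 1$ invokes Lemma~\ref{lemma5}, which is stated only for curves that are neither hyperelliptic nor exceptional, whereas the theorem is asserted for \emph{any} $C$; and the ``codimension-two condition that a section of $P$ lift'' is not correct in general (the lifting condition on the extension class has codimension governed by $h^1(M)$, not by $2$). So even the irreducibility half of your argument is not complete as written.

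The paper bypasses all of this with a single stroke: it suffices to show $\dim \Sing\bigl(B^2_{n,n(g-1)}\bigr) \le n^2(g-1)-5$, since a Cohen--Macaulay determinantal locus of the expected dimension that is regular in codimension one is normal, hence both reduced and (being connected) irreducible. The bound on the singular locus is obtained as follows. Theorems~\ref{boundthrmFirst} and~\ref{boundthrmSecond} give $\dim B^3_{n,n(g-1)} \le n^2(g-1)-5$, so any large singular component $\Xi$ has general point $V$ with $h^0(V)=2$ and non-injective Petri map. But if such a $V$ is of second type, then in diagram~(\ref{maindiagram}) both $\mu$ and $\mu_{L,V}$ are injective (the latter because $\dim W = 1$), so $\mu^n_V$ is injective by the Five Lemma; hence a general $V \in \Xi$ is of first type, with a line subbundle $L_V$ satisfying $h^0(L_V)=2$. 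For a general point $x_0 \in C$ one then has $h^0(V(-x_0)) \ge h^0(L_V(-x_0)) \ge 1$, so $V \mapsto V(-x_0)$ embeds $\Xi$ into $B^1_{n,n(g-1)-n}$. By Remark~\ref{kEqualsOne} the latter is smooth of dimension exactly $n^2(g-1)-n$, forcing $n^2(g-1)-4 \le n^2(g-1)-n$, i.e.\ $n \le 4$. This is precisely where the hypothesis $n \ge 5$ enters, and it is the idea your proposal is missing.
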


\begin{proof}
Since $B^2_{n, n(g-1)}$ has dimension $n^2 (g-1) - 3$, in order to prove both irreducibility and reducedness it would suffice to show that
$$
\dim \Sing \left( B^2_{n, n(g-1)} \right) \ \leq \ n^2 (g-1) - 5 .
$$
We assume for a contradiction that there exists a component $\Xi$ of $\Sing \left( B^2_{n, n(g-1)} \right)$ such that $\dim \Xi \ge n^2 (g-1) - 4$.

Firstly, by Theorems \ref{boundthrmFirst} and \ref{boundthrmSecond}, all components of $B^3_{n , n(g-1)}$ have dimension at most
$$
n(n-1)(g-1) + n(g-1) - 2 \cdot 3 + 1 \ = \ n^2 ( g-1 ) - 5 .
$$
Therefore, we may assume for a general $V \in \Xi$ that $h^0 (V) = 2$ and that the Petri map $\mu \colon H^0 (V) \otimes H^0 (K \otimes V^*) \to H^0 (K \otimes \End V)$ has nonzero kernel.

Now if $V \in B^2_{n, n(g-1)}$ is of second type, then both the morphisms $\mu$ and $\mu_{L, V}$ in diagram (\ref{maindiagram}) are injective, and $\mu^n_V$ is injective by the Five Lemma. Thus a general $V \in \Xi$ must be of first type. In particular, a general $V \in \Xi$ has a line subbundle $L_V$ with $h^0 (L_V) = h^0 (V) = 2$.

Let $x_0$ be a fixed general point of $C$. Then since $L_V$ has rank one,
$$
h^0 (V \otimes \Oc (-x_0)) \ = \ h^0 (L_V (-x_0)) \ \ge \ 1
$$
for general $V \in \Xi$. Thus the association $V \mapsto V(-x_0)$ defines a morphism $\Xi \to B^1_{n, n(g-1)-n}$ which is clearly injective. It follows that $B^1_{n, n(g-1)-n}$ has a component of dimension at least $\dim \Xi = n^2 (g-1) - 4$. But by Remark \ref{kEqualsOne}, every component of $B^1_{n, n(g-1)-n}$ has dimension $n^2 (g-1) - n$. Therefore, $n \leq 4$ and this gives a contradiction to our assumption on $n$. Thus no such $\Xi$ can exist. The theorem follows.
\end{proof}

\noindent \textbf{Acknowledgment:} Ali Bajravani is supported by a George Forster Fellowship at Humboldt-Universität zu Berlin. He would like to express his gratitude to Gavril Farkas and to the Alexander von Humboldt Foundation for their support.


\end{document}